\theoremstyle{definition}
  \newtheorem{defn}{Definition}[section]
\theoremstyle{plain}
  \newtheorem{lem}[defn]{Lemma}
  \newtheorem{thm}[defn]{Theorem}
  \newtheorem{cor}[defn]{Corollary}
\numberwithin{equation}{section}
\journal{j}
\begin{document}
\begin{frontmatter}

\newtheorem{theorem}{Theorem}[section]
\newtheorem{lemma}[theorem]{Lemma}
\newtheorem{proposition}[theorem]{Proposition}
\newtheorem{corollary}[theorem]{Corollary}
\theoremstyle{definition}
\newtheorem{definition}[theorem]{Definition}
\newtheorem{example}[theorem]{Example}
\newtheorem{xca}[theorem]{Exercise}
\newtheorem{problem}[theorem]{Problem}
\theoremstyle{remark}
\newtheorem{remark}[theorem]{Remark}
\newtheorem{note}[theorem]{Note}
\numberwithin{equation}{section}

\title{Hyers-Ulam stability of homomorphisms and $\ast$-homomorphisms on Hilbert $C^{*}$-modules}

\author[a]{Sajjad Khan}
\address[a]{Department of Mathematics, Hanyang University, Seoul 04763, Korea}
\ead{sajjadafridi@hanyang.ac.kr}

\author[b]{Choonkil Park\corref{mycorrespondingauthor}}
\address[b]{Research Institute for Convergence of Basic Sciences, Hanyang University,  Seoul 04763, Korea}
\ead{baak@hanyang.ac.kr}
\cortext[mycorrespondingauthor]{Corresponding author}
%%=================================================================

\begin{abstract}
In this paper, we introduce the idea of $\ast$-homomorphism on a Hilbert $C^{*}$-module. Furthermore, we prove the Hyers-Ulam stability of homomorphisms and $\ast$-homomorphisms on Hilbert $C^{*}$-modules using the fixed point method.
\end{abstract}

\begin{keyword} Hilbert $C^{*}$-module \sep Hilbert $C^{*}$-module homomorphism \sep Hilbert $C^{*}$-module $\ast$-homomorphism \sep fixed point method \sep Hyers-Ulam stability. 
\MSC[2020] 39B52 \sep 39B82  \sep 46L08  \sep 47B48  \sep 47H10
\end{keyword}
%\subjclass[2000]{Primary 54C40, 14E20; Secondary 46E25, 20C20}
\end{frontmatter}

\section{Introduction and preliminaries} 

The stability problem of functional equations was arose from a question posed by Ulam \cite{Ulam} 
in 1940.
%during his talk before a Mathematical Colloquium at the University of Wisconsin in 1940.
Specifically, the question was: Let $(G,\ast)$ be a group and $(G^{\diamond},\diamond, d^{\diamond})$ be a metric group with the metric $d^{\diamond}$. For any $\epsilon >0$, does there exist a $\delta >0$ such that if $f:G\rightarrow G^{\diamond}$ satisfies $d(f(x \ast y),f(x)\diamond f(y))< \delta$ for all $x,y \in G$, then a homomorphism $h:G \rightarrow G^{\diamond}$ exists with $d^{\diamond}(f(x),h(x))<\epsilon$ for all $x \in G$? If such $h$ exists, then the functional equation $h(x \ast y)=h(x)\diamond h(y)$ is stable. In 1941, Hyers \cite{Hyers} provided the first affirmative answer for approximately additive mappings in the context of Banach spaces. In 1978, Rassias \cite{Rassias 1978} generalized the Hyers' result for linear mappings. G\v{a}vruta \cite{Gavruta} introduced a general control function instead of the Cauchy difference introduced in \cite{Rassias 1978}, and further extended the Rassias' results. Rassias \cite{Rassias 1990} posed the question in 1990 about whether a similar theorem could be established for 
$p\geq 1$. Gajda \cite{Gajda} provided an affirmative answer to this question for 
$p > 1$, following the same approach as in \cite{Rassias 1978} by Rassias. Gajda \cite{Gajda}, along with Rassias and \v{S}emrl \cite{Rassias 1993}, showed that the Rassias' results cannot be established for $p=1$.\

In 1991, Baker \cite{Baker} used the fixed point method for the first time to study the stability of functional equations. Then, Radu \cite{Radu} and Cadariu and Radu \cite{Cadariu1} studied the stability of functional equations via fixed point method. In 1996, Isac and Rassias \cite{Isac} were the first to apply the stability theory of functional equations to establish new fixed point theorems and their applications. Using fixed point methods, many authors have extensively investigated the stability problem of various functional equations (see, for example \cite{Cadariu2, Siriluk, Park 2017}).

Park \cite{Park rho 1, Park rho 2} introduced additive $\rho$-functional inequalities and established the Hyers-Ulam stability of these inequalities in both Banach spaces and non-Archimedean Banach spaces. The stability problems of various functional equations, functional inequalities, and differential equations have been thoroughly studied by numerous researchers (see \cite{Sajjad1, Sajjad2, Park bi-derivation}).

In mathematics, the notion of a Hilbert $C^*$-module generalizes the concept of a Hilbert space by replacing the field of complex numbers with a general $C^{*}$-algebra. This concept was first introduced by Kaplansky in  \cite{kaplansky}. The stability of various mappings on Hilbert $C^{*}$-modules has been investigated by several mathematicians (see \cite{Ali, Ismail}).

Let $\mathcal{A}$ be a $C^{*}$-algebra and $\Xi$ be a complex linear space equipped with a compatible left $\mathcal{A}$-module action (i.e., $\lambda(a x)=(\lambda a)x=a(\lambda x)$ for $x \in \Xi$, $a \in \mathcal{A}$, $\lambda\in\mathbb{C}$). The space $\Xi$ is called a (left) pre-Hilbert $\mathcal{A}$-module or (left) inner product $\mathcal{A}$-module if there exists a mapping $\langle\cdot, \cdot\rangle: \Xi \times \Xi \rightarrow \mathcal{A}$, satisfying the following conditions: For all $x,y,z \in \Xi, \alpha,\beta \in \mathbb{C}, a \in \mathcal{A},$
\begin{itemize}
\item[(i)] $\langle \alpha x + \beta y,z  \rangle= \alpha \langle x,z \rangle + \beta \langle y,z \rangle$;
\item[(ii)] $\langle a x,y \rangle=a \langle x,y \rangle $;
\item[(iii)] $\langle x,y \rangle^{*} =\langle y,x \rangle$;
\item[(iv)] $\langle x,x \rangle\geq 0$ and $\langle x,x\rangle=0$ if and only if $x=0.$
\end{itemize}
The (left) pre-Hilbert space $\Xi$ is considered a (left) Hilbert $\mathcal{A}$-module or a (left) Hilbert $C^{*}$-module over the $C^{*}$-algebra $\mathcal{A}$ if it is complete with the induced norm $\|x\|:=\|\langle x , x\rangle\|^\frac{1}{2}$ \cite{Lance}. Right Hilbert $C^{*}$-modules can be defined in a similar way. In this paper, we will simply refer to a (left) Hilbert $C^*$-module over the $C^{*}$-algebra $\mathcal{A}$ as a Hilbert $C^*$-module. As an example, it is easy to see that every complex Hilbert space $\Xi$ is a Hilbert $C^*$-module over the $C^{*}$-algebra $\mathbb{C}$, with its inner product. Similarly, every $C^*$-algebra $\mathcal{A}$ can be regarded as a Hilbert $C^*$-module where, 
$\langle x,y\rangle = x y^{*}$, for $x,y \in \mathcal{A}$.

Khan \textit{et al.} \cite{Sajjad3} introduced the idea of fuzzy Hilbert $C^{*}$-module homomorphisms and fuzzy Hilbert $C^{*}$-module derivations and studied their Hyers-Ulam stability while in \cite{Sajjad4} they introduced Jordan homomorphisms and Jordan $\ast$-homomorphisms in Hilbert $C^{*}$-modules and discussed their Hyers-Ulam stability.

 In this paper, we introduce Hilbert $C^{*}$-module $\ast$-homomorphisms and prove the Hyers-Ulam stability of the Hilbert $C^{*}$-module homomorphisms and Hilbert $C^{*}$-module $\ast$-homomorphisms. It is observed that the definition of a Hilbert $C^{*}$-module homomorphism first appeared in \cite{Homomorphism definition}. 

\begin{defn} \cite{Homomorphism definition}
Let $\mathcal{A}$ and $\mathcal{B}$ be $C^{*}$-algebras, $\Xi$ be a Hilbert $\mathcal{A}$-module and $\nabla$ be a Hilbert $\mathcal{B}$-module. A mapping $H:\Xi \rightarrow \nabla$ is called a Hilbert $C^{*}$-module homomorphism if $H$ is $\mathbb{C}$-linear and satisfies
\begin{equation}\label{1.1}
H\left( \langle x,y\rangle z\right) =\langle H(x),H(y)\rangle H(z)
\end{equation}
for all $x,y,z \in \Xi $.
\end{defn}

\begin{defn}\label{1.2}
A Hilbert $C^{*}$-module homomorphism $H: \Xi \rightarrow \nabla $ is called a Hilbert $C^{*}$-module $\ast$-homomorphism if 
\begin{equation}
H\left( \langle x,y\rangle ^{*} z\right) =\langle H(x),H(y)\rangle ^{*} H(z)
\end{equation}
for all $x,y,z \in \Xi $.
\end{defn}

We recall a fundamental result in the fixed point theory. 
\begin{thm}\label{FPA}\cite{Cadariu1}
 Let (X,d) be a complete generalized metric space
and $J: X\rightarrow X$ be a strictly contractive mapping with
Lipschitz constant $L<1$. Then, for all $x\in X$, either
\begin{equation*}
d(J^n x,J^{n+1}x)=\infty
\end{equation*}
for all nonnegative integers $n$ or there exists a positive
integer $n_0$ such that

$(1)$ $d(J^n x,J^{n+1}x)<\infty$ for all $n\geq n_0$;

$(2)$ the sequence $\{J^n x\}$ converges to a fixed point $y^*$ of
$J$;

$(3)$ $y^*$ is the unique fixed point of $J$ in the set $Y=\{y\in
X : d(J^{n_0}x,y)<\infty\}$;

$(4)$ $d(y,y^*)\leq \frac{1}{1-L}d(y,Jy)$ for all $y\in Y$.
\end{thm}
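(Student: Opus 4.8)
The plan is to exploit the geometric decay of successive distances forced by strict contractivity, which drives the entire argument in the spirit of the classical Banach fixed point theorem, the only new feature being that $d$ may take the value $+\infty$. First I would fix $x \in X$ and examine the scalar sequence $a_n := d(J^n x, J^{n+1}x)$. Contractivity gives $a_{n+1} = d\bigl(J(J^n x), J(J^{n+1}x)\bigr) \le L\, a_n$, so that if some $a_n$ is finite then every subsequent term is finite as well. This immediately yields the dichotomy: either $a_n = \infty$ for every $n$ (the first alternative), or there is a least integer $n_0$ with $a_{n_0} < \infty$, in which case $a_n \le L^{n - n_0} a_{n_0} < \infty$ for all $n \ge n_0$, establishing $(1)$.

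Working in the second case, I would show that $\{J^n x\}_{n \ge n_0}$ is Cauchy. For $m > n \ge n_0$ the (generalized) triangle inequality together with the geometric bound give
\[
d(J^n x, J^m x) \le \sum_{k=n}^{m-1} a_k \le \frac{L^{\,n - n_0}}{1-L}\, a_{n_0},
\]
which tends to $0$ as $n \to \infty$ because $a_{n_0} < \infty$ and $L < 1$. Completeness of $X$ then produces a limit $y^*$, and since a strictly contractive map is Lipschitz continuous one may pass to the limit in $J(J^n x) = J^{n+1} x$ to conclude $J y^* = y^*$, proving $(2)$. Letting $m \to \infty$ in the displayed estimate with $n = n_0$ also shows $d(J^{n_0} x, y^*) < \infty$, so $y^* \in Y$.

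For $(3)$, if $y_1, y_2$ are fixed points lying in $Y$, the triangle inequality through $J^{n_0} x$ gives $d(y_1, y_2) < \infty$, while $d(y_1, y_2) = d(J y_1, J y_2) \le L\, d(y_1, y_2)$ forces $d(y_1, y_2) = 0$ since $L < 1$; hence $y^*$ is the unique fixed point in $Y$. For $(4)$, I would take $y \in Y$, note $d(y, y^*) < \infty$ (again via $J^{n_0} x$), and estimate
\[
d(y, y^*) \le d(y, Jy) + d(Jy, J y^*) \le d(y, Jy) + L\, d(y, y^*),
\]
so that $(1-L)\, d(y, y^*) \le d(y, Jy)$, which rearranges to the claimed bound.

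The one genuinely delicate point, and the step I expect to require the most care, is the bookkeeping of infinite distances: every cancellation and division performed above is legitimate only after one has certified that the relevant distance is finite. Thus the crux is to confine all estimates to the ``finite region'' determined by $n_0$ — verifying that $y^* \in Y$, that any two competing fixed points in $Y$ lie at finite distance, and that $d(y, y^*) < \infty$ before dividing by $1 - L$ — since in a generalized metric space the triangle inequality by itself does not preclude the value $+\infty$.
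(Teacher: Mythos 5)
Your proof is correct: the dichotomy via $a_{n+1}\le L\,a_n$, the geometric-series Cauchy estimate, passage to the limit using Lipschitz continuity of $J$, and the careful confinement of all cancellations to finite distances constitute the standard Diaz--Margolis argument for the fixed point alternative. Note, however, that the paper itself offers no proof of this statement --- it is quoted verbatim from the cited reference of Cadariu and Radu --- so there is nothing internal to compare against; your argument matches the classical proof given in that source.
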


Throughout the paper, we will denote $\Xi$ as a Hilbert $\mathcal{A}$-module and $\nabla$ as a Hilbert $\mathcal{B}$-module. The paper is organized as follows: In Section 2, we prove the Hyers-Ulam stability of Hilbert $C^{*}$-module homomorphisms. In Section 3, we establish the same results for Hilbert $C^{*}$-module $\ast$-homomorphisms. 
%

%~~~~~~~~~~~~~~~~~~~~~~~~~~~~~~~~~~~~~~~~~~~~~~~~~~~~~~~~~~~~~~~~~~~~~~~~~~~~~~~~~~~~~~~~~~~~~~~~~~

%----------------------------------------------------------------------------------------------------------------------------------------------------------------
%-------------------------------------------------------------------------------------------------------------------------------------------------------------------------

%%%%%%%%%%%%%%%%%%%%%%%%%%%%%%%%%%%%%%%%%%%%%%%%%%%%%%%

\section{Stability of Hilbert $C^{*}$-module homomorphisms}
We will need the following lemma for our main results.

\begin{lem}\label{c-linear}\cite{c-linear}
Let $X$ and $Y$ be linear spaces and $f:X \rightarrow Y$ be an additive mapping such that $f(\mu x)=\mu f(x)$ for all $x \in X$ and any $\mu \in \mathbb{T}=\lbrace z \in \mathbb{C} : \vert z\vert =1\rbrace$. Then, the mapping f is $\mathbb{C}$-linear.
\end{lem}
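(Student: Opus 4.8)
The plan is to leverage additivity to obtain homogeneity over a suitable set of scalars and then bootstrap to all of $\mathbb{C}$ by decomposing complex numbers into unit-modulus pieces, thereby sidestepping any appeal to continuity (which is unavailable here, since $X$ and $Y$ carry no topology and $f$ is assumed merely additive). Throughout I treat $X$ and $Y$ as complex linear spaces, which is implicit in the very statement $f(\mu x)=\mu f(x)$ for $\mu\in\mathbb{T}$.

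First I would record the routine consequences of additivity: setting $x=y=0$ gives $f(0)=0$, an easy induction yields $f(nx)=nf(x)$ for every positive integer $n$, and $f(-x)=-f(x)$ then extends this to $f(nx)=nf(x)$ for all $n\in\mathbb{Z}$. This integer homogeneity is the only extra ingredient I will need for the final rescaling step.

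The crux is to upgrade the hypothesis $f(\mu x)=\mu f(x)$, valid only on $\mathbb{T}$, to the same identity for every $\lambda\in\mathbb{C}$. The key observation is that any $\lambda\in\mathbb{C}$ with $\abs{\lambda}\le 2$ can be written as $\lambda=\mu_1+\mu_2$ with $\mu_1,\mu_2\in\mathbb{T}$: writing $\lambda=\abs{\lambda}e^{i\phi}$ and choosing $\alpha$ with $\cos\alpha=\abs{\lambda}/2\in[0,1]$, the points $\mu_1=e^{i(\phi+\alpha)}$ and $\mu_2=e^{i(\phi-\alpha)}$ satisfy $\mu_1+\mu_2=2\cos\alpha\,e^{i\phi}=\lambda$. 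For such $\lambda$, additivity together with the hypothesis gives
\begin{equation*}
f(\lambda x)=f(\mu_1 x+\mu_2 x)=f(\mu_1 x)+f(\mu_2 x)=\mu_1 f(x)+\mu_2 f(x)=\lambda f(x).
\end{equation*}

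Finally, for an arbitrary $\lambda\in\mathbb{C}$ I would pick an integer $n>\abs{\lambda}/2$, so that $\abs{\lambda/n}\le 2$; combining integer homogeneity with the previous step yields
\begin{equation*}
f(\lambda x)=f\!\left(n\cdot\tfrac{\lambda}{n}x\right)=n\,f\!\left(\tfrac{\lambda}{n}x\right)=n\cdot\tfrac{\lambda}{n}\,f(x)=\lambda f(x).
\end{equation*}
Since $f$ is additive and now satisfies $f(\lambda x)=\lambda f(x)$ for every $\lambda\in\mathbb{C}$, it is $\mathbb{C}$-linear. The main obstacle to anticipate is precisely the absence of topology: additivity alone forces only $\mathbb{Q}$-homogeneity, so the entire weight of the argument rests on the circle decomposition $\lambda=\mu_1+\mu_2$, which manufactures the missing scalars with no limiting process whatsoever.
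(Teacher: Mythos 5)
Your proof is correct and complete: the reduction of $\mathbb{C}$-homogeneity to $\mathbb{T}$-homogeneity via the two-term decomposition $\lambda=e^{i(\phi+\alpha)}+e^{i(\phi-\alpha)}$ with $\cos\alpha=\left\vert\lambda\right\vert/2$ is valid for $\left\vert\lambda\right\vert\le 2$, and the rescaling by an integer $n>\left\vert\lambda\right\vert/2$ correctly handles arbitrary scalars, with additivity supplying the integer homogeneity you need. Be aware, though, that the paper itself offers no proof of this lemma --- it is quoted verbatim from an earlier paper of Park --- so there is no in-text argument to match; the comparison has to be with that cited source. The standard proof there follows the same overall strategy (exact $\mathbb{Q}$- or $\mathbb{Z}$-homogeneity from additivity, then expression of a rescaled scalar as a sum of unimodular numbers), but it is usually phrased by choosing an integer $M>4\left\vert\lambda\right\vert$ and invoking the fact that $3\lambda/M$, having modulus less than $1$, is a sum of three elements of $\mathbb{T}$ --- a special case of the Kadison--Pedersen unitary-average theorem. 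Your version is slightly more economical and entirely self-contained: the elementary trigonometric identity replaces the appeal to an external decomposition result, and two unimodular summands suffice instead of three. Both arguments correctly avoid any continuity or topology, which, as you note, is the essential point, since additivity alone yields only $\mathbb{Q}$-linearity.
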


\begin{thm}\label{theorem 1}
Let $f:\Xi \rightarrow \nabla$ be a mapping for which there exists a function  $\varphi: \Xi^{3}\rightarrow [0,\infty)$ such that 
%\begin{equation}\label{Sum}
% \sum_{n=0}^{\infty} \frac{1}{2^{n}}\varphi(2^{n}x,2^{n}y,2^{n}u,2^{n}v,2^{n}w) < \infty 
%\end{equation}
\begin{equation}\label{2.1}
\vert\vert f(\mu x +y)-\mu f(x)-f(y)\vert\vert \leq \varphi(x,y,0),
\end{equation}
\begin{equation}\label{2.2}
\| f(\langle x,y\rangle z)-\langle f(x), f(y) \rangle f(z) \| \leq \varphi(x,y,z)
\end{equation}
for all $x,y,z \in \Xi$ and all $\mu \in \mathbb{T}$. If there exists  $0\leq L < 1$ such that $\varphi (x,y,z)\leq 2 L \varphi (\frac{x}{2},\frac{y}{2},\frac{z}{2})$ for all $x,y,z \in \Xi $, then there exists a unique Hilbert $C^{*}$-module homomorphism $H:\Xi \rightarrow \nabla$ such that 
\begin{equation}\label{2.3}
\| f(x)- H(x) \| \leq \dfrac{1}{2-2L}\varphi(x,x,0)
\end{equation}
for all $x \in \Xi$.
\end{thm}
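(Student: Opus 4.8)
The plan is to run the standard fixed-point-alternative argument of Cadariu and Radu, packaged by Theorem \ref{FPA}. First I introduce the set $S$ of all mappings $g:\Xi\to\nabla$ together with the generalized metric
\[
 d(g,h)=\inf\set{K\in[0,\infty]:\norm{g(x)-h(x)}\le K\,\varphi(x,x,0)\ \text{for all } x\in\Xi},
\]
which is well known to be complete. On $S$ I define the operator $J$ by $(Jg)(x)=\tfrac12 g(2x)$, so that $(J^{n}g)(x)=2^{-n}g(2^{n}x)$.

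Next I check that $J$ is strictly contractive with constant $L$. Replacing each argument of the hypothesis $\varphi(x,y,z)\le 2L\,\varphi(\tfrac x2,\tfrac y2,\tfrac z2)$ by its double gives $\varphi(2x,2x,0)\le 2L\,\varphi(x,x,0)$. Hence if $d(g,h)\le K$ then
\[
 \norm{(Jg)(x)-(Jh)(x)}=\tfrac12\norm{g(2x)-h(2x)}\le \tfrac12 K\,\varphi(2x,2x,0)\le KL\,\varphi(x,x,0),
\]
so $d(Jg,Jh)\le L\,d(g,h)$ with $L<1$. Putting $\mu=1$ and $y=x$ in \eqref{2.1} yields $\norm{f(2x)-2f(x)}\le\varphi(x,x,0)$, i.e.\ $\norm{(Jf)(x)-f(x)}\le\tfrac12\varphi(x,x,0)$, so $d(f,Jf)\le\tfrac12$.

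Theorem \ref{FPA} then provides a unique fixed point $H$ of $J$ in $Y=\set{g\in S:d(f,g)<\infty}$, with $H(x)=\lim_{n\to\infty}2^{-n}f(2^{n}x)$ and $d(f,H)\le\frac{1}{1-L}d(f,Jf)\le\frac{1}{2-2L}$, which is precisely the bound \eqref{2.3}. Iterating the rescaled hypothesis gives $\varphi(2^{n}x,2^{n}y,2^{n}z)\le(2L)^{n}\varphi(x,y,z)$. Replacing $(x,y)$ by $(2^{n}x,2^{n}y)$ in \eqref{2.1}, dividing by $2^{n}$, and letting $n\to\infty$ (the error is bounded by $2^{-n}\varphi(2^{n}x,2^{n}y,0)\le L^{n}\varphi(x,y,0)\to 0$) shows $H(\mu x+y)=\mu H(x)+H(y)$ for all $\mu\in\mathbb{T}$; with $\mu=1$ this is additivity, and then Lemma \ref{c-linear} upgrades $H$ to a $\mathbb{C}$-linear map.

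Finally I pass to the limit in \eqref{2.2} to obtain the multiplicative identity \eqref{1.1}. The step requiring care is the scaling bookkeeping: since $\langle\cdot,\cdot\rangle$ is linear in the first slot and conjugate-linear in the second, one has $\langle 2^{n}x,2^{n}y\rangle\,2^{n}z=2^{3n}\langle x,y\rangle z$, so replacing $(x,y,z)$ by $(2^{n}x,2^{n}y,2^{n}z)$ in \eqref{2.2} and dividing by $2^{3n}$ gives
\[
 \norm{\frac{f(2^{3n}\langle x,y\rangle z)}{2^{3n}}-\Big\langle\frac{f(2^{n}x)}{2^{n}},\frac{f(2^{n}y)}{2^{n}}\Big\rangle\frac{f(2^{n}z)}{2^{n}}}\le\frac{(2L)^{n}}{2^{3n}}\varphi(x,y,z)=\Big(\frac{L}{4}\Big)^{n}\varphi(x,y,z).
\]
As $n\to\infty$ the right-hand side vanishes, the first term tends to $H(\langle x,y\rangle z)$ (a subsequence of the defining limit), and continuity of the inner product and of the module action sends the second term to $\langle H(x),H(y)\rangle H(z)$, giving \eqref{1.1}. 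I expect the main obstacle to be exactly this bookkeeping of powers of $2$: one must notice that the sesquilinear pairing contributes a factor $2^{n}\cdot 2^{n}$ and the module action a further $2^{n}$, so the total normalization $2^{3n}$ matches the limit defining $H$ while the control term decays like $(L/4)^{n}$. Everything else is the routine fixed-point machinery, and uniqueness of $H$ as a homomorphism satisfying \eqref{2.3} follows from the uniqueness clause of Theorem \ref{FPA}.
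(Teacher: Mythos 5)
Your proposal is correct and follows essentially the same route as the paper: the same generalized metric on the space of maps $\Xi\to\nabla$, the same operator $(Jg)(x)=\tfrac12 g(2x)$, the fixed point alternative to produce $H(x)=\lim_n 2^{-n}f(2^nx)$ with the bound $\tfrac{1}{2-2L}\varphi(x,x,0)$, and the same passage to the limit in \eqref{2.1} and \eqref{2.2} (your explicit verification that $J$ is an $L$-contraction, and your sharper decay rate $(L/4)^n$ where the paper settles for $L^n$, are only cosmetic differences).
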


\begin{proof}
Consider $\mathcal{S}=\lbrace g:\Xi\rightarrow \nabla \rbrace$, and define the generalized metric on $\mathcal{S}$:
\begin{equation}\label{2.4}
d(g,h):= \text{inf} \lbrace c \in (0,\infty):\|g(x)-h(x)\|\leq c \varphi(x,x,0), \forall x \in \Xi\rbrace.
\end{equation}
It is easy to show that $(\mathcal{S},d)$ is a complete metric space (see \cite{m-r}). 

Now, consider the linear mapping $J: \mathcal{S}\rightarrow \mathcal{S}$
\begin{equation}\nonumber
(Jg)(x):= \dfrac{1}{2}g(2x)
\end{equation}
for all $x\in \Xi$.\\
By \cite[Theorem 3.1]{Cadariu1}, $J$ is a strictly contractive mapping.

Letting $x=y$, $\mu =1$ in (\ref{2.1}), we obtain
\begin{equation}\label{2.5}
\left \| f(2x)-2f(x) \right \|\leq \varphi(x,x,0)
\end{equation}
for all $x \in \Xi$. This implies that 
\begin{equation}\nonumber
\left \| f(x)- \dfrac{1}{2}f(2x) \right \| \leq\frac{1}{2}\varphi(x,x,0)
\end{equation}
for all $x \in \Xi$. Thus $d(f,Jf)\leq \frac{1}{2}$.

By Theorem \ref{FPA}, there exists a mapping $H:\Xi \rightarrow \nabla$ which is a fixed point of $J$ such that 
\begin{equation}\label{2.6}
H(2x)=2H(x)
\end{equation}
for all $x \in \Xi$ and $\displaystyle{\lim_{n\to\infty}} d(J^{n}f,H)=0$. This implies that 
\begin{equation}\label{2.7}
 \lim_{n\to\infty}\dfrac{f(2^{n}x)}{2^{n}}=H(x) 
\end{equation}
for all $x \in \Xi.$ Notice that  $H$ is the unique fixed point of $J$ in the set 
\begin{equation}\nonumber
U=\lbrace g \in \mathcal{S}: d(g,f)<\infty\rbrace.
\end{equation}
This implies that $H$ is the unique fixed point satisfying (\ref{2.6}) such that there exists $c \in (0,\infty)$ satisfying 
\begin{equation}\nonumber
\| f(x) - H(x)\|\leq c \varphi(x,x,0)
\end{equation}
for all $x \in \Xi$.\\
Applying Theorem \ref{FPA} once again, we obtain $d(f,H)\leq \frac{1}{1-L}d(f,Jf)\leq \frac{1}{2-2L}.$ This implies 
\begin{equation}\nonumber
\|f(x)-H(x)\|\leq \frac{1}{2-2L} \varphi(x,x,0)
\end{equation}
for all $x\in \Xi$ and (\ref{2.3}) holds.

Now we show that $H$ is $\mathbb{C}$-linear. From the assumption $\varphi(x,y,z) \leq 2L \varphi(\frac{x}{2},\frac{y}{2},\frac{z}{2})$ for all $x,y,z \in \Xi$, it follows that
\begin{equation}\label{2.8}
\lim_{n\to\infty} \frac{1}{2^{n}}\varphi (2^{n}x,2^{n}y,2^{n}z)\leq \lim_{n\to\infty} \frac{2^{n}L^{n}}{2^{n}}\varphi (x,y,z)=0.
\end{equation}
In (\ref{2.1}), replacing $x$ by $2^{n}x$ and $y$ by $2^{n}y$, respectively, we have
\begin{equation}\nonumber
\|f(\mu 2^{n}x+2^{n}y)-\mu f(2^{n}x)-f(2^{n}y)\|\leq \varphi(2^{n}x,2^{n}y,0).
\end{equation}
Taking the limit as $n \rightarrow \infty$, using (\ref{2.7}) and (\ref{2.8}), we obtain 
\begin{equation}\label{2.9}
H(\mu x+y)=\mu H(x)+H(y)
\end{equation}
for all $\mu \in \mathbb{T}$ and all $x,y \in \Xi.$ The case  $\mu=1$ in (\ref{2.9}) implies that $H$ is additive and so $H(0)=0$, together with these, Lemma \ref{c-linear} implies that $H$ is $\mathbb{C}$-linear.

Putting $x,y,z$ as $2^{n}x,2^{n}y,2^{n}z$, respectively, in (\ref{2.2}), we obtain 
\begin{equation}\nonumber 
\|f(\left\langle  2^{n}x,2^{n}y \right\rangle  2^{n}z)- \left\langle  f(2^{n}x),f(2^{n}y)\right\rangle  f(2^{n}z)\|\leq \varphi(2^{n}x,2^{n}y,2^{n}z).
\end{equation}
This further implies that 
\begin{align*}\nonumber
\left \| 2^{-3n}f(2^{3n}\left\langle  x,y\right\rangle  z)-\left\langle  \frac{f(2^{n}x)}{2^{n}},\frac{f(2^{n}y)}{2^{n}}\right\rangle  \frac{f(2^{n}z)}{2^{n}} \right \| & \leq 2^{-3n}\varphi(2^{n}x,2^{n}y,2^{n}z)\\
& \leq  2^{-3n}8^{n}L^{n}\varphi(x,y,z)
\end{align*}
for all $x,y,z \in \Xi$, since $\varphi(x,y,z)\leq 2L \varphi(\frac{x}{2},\frac{y}{2},\frac{z}{2}) \leq 8L \varphi(\frac{x}{2},\frac{y}{2},\frac{z}{2})$.\\
Taking the limit as $n\rightarrow \infty $, applying (\ref{2.7}) and (\ref{2.8}), we get 
\begin{equation}\nonumber
H(\left\langle  x,y\right\rangle  z) = \left\langle H(x), H(y)\right\rangle H(z)
\end{equation}
for all $x,y,z \in \Xi$. Thus, $H$ is a Hilbert $C^{*}$-module homomorphism. 
\end{proof}

The following corollary gives us the Hyers-Ulam-Rassias stability of homomorphisms in $\Xi$.

\begin{cor}
Let $p\in [0,1)$, $\theta$ be a nonnegative real number and $f:\Xi \rightarrow \nabla $ be a mapping such that 
\begin{equation}\label{2.10}
\| f(\mu x +y)-\mu f(x)-f(y)\|  \leq \theta \left( \|x\|^{p}+\|y\|^{p}\right), 
\end{equation}
\begin{equation}\label{2.11}
\| f(\left\langle  x,y\right\rangle z)- \left\langle f(x),f(y)\right\rangle f(z) \|  \leq \theta \left( \|x\|^{p}+\|y\|^{p}+\|z\|^{p} \right) 
\end{equation}
for all $x,y,z \in \Xi$ and all $\mu \in \mathbb{T}$. Then there exists a unique Hilbert $C^{*}$-module homomorphism $H:\Xi \rightarrow \nabla $ such that 
\begin{equation}
\| f(x)- H(x) \| \leq \frac{2 \theta}{2-2^{p}}\|x\|^{p}
\end{equation}
for all $x \in \Xi$.
\end{cor}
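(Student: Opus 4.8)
The plan is to derive this corollary as a direct specialization of Theorem~\ref{theorem 1}. First I would define the control function $\varphi:\Xi^{3}\to[0,\infty)$ by $\varphi(x,y,z):=\theta(\|x\|^{p}+\|y\|^{p}+\|z\|^{p})$. With this choice, hypothesis~(\ref{2.2}) of the theorem is exactly the given inequality~(\ref{2.11}), while hypothesis~(\ref{2.1}) follows from~(\ref{2.10}) because $\varphi(x,y,0)=\theta(\|x\|^{p}+\|y\|^{p})$, using $\|0\|^{p}=0$. Thus both functional-inequality assumptions of the theorem are met without any extra work, and $f$ is placed into the exact framework of the fixed-point argument already established.

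The only genuine computation is to produce a Lipschitz constant $L\in[0,1)$ satisfying $\varphi(x,y,z)\le 2L\,\varphi(\tfrac{x}{2},\tfrac{y}{2},\tfrac{z}{2})$. Since the module norm is positively homogeneous, $\varphi(\tfrac{x}{2},\tfrac{y}{2},\tfrac{z}{2})=2^{-p}\varphi(x,y,z)$, so the required inequality reduces to $1\le 2^{1-p}L$. Because $p\in[0,1)$ forces $2^{p-1}\in[\tfrac12,1)$, I would take $L:=2^{p-1}$, which indeed lies in $[0,1)$ and meets the contraction condition (in fact with equality, making it the optimal choice). This is the one place where the restriction $p<1$ is essential: at $p=1$ one would be forced to take $L=1$ and the strict contractivity needed for Theorem~\ref{FPA} breaks down, consistent with the known failure of stability at the critical exponent noted in the introduction.

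With these verifications in place, Theorem~\ref{theorem 1} immediately yields a unique Hilbert $C^{*}$-module homomorphism $H$ with $\|f(x)-H(x)\|\le\frac{1}{2-2L}\varphi(x,x,0)$. It then remains only to simplify this bound: substituting $\varphi(x,x,0)=2\theta\|x\|^{p}$ and $L=2^{p-1}$ gives $2-2L=2-2^{p}$, whence
\begin{equation}\nonumber
\|f(x)-H(x)\|\le\frac{1}{2-2^{p}}\cdot 2\theta\|x\|^{p}=\frac{2\theta}{2-2^{p}}\|x\|^{p},
\end{equation}
which is the asserted estimate. I do not anticipate any real obstacle here; the whole argument is essentially a bookkeeping exercise in choosing $\varphi$ and reading off the optimal $L$, with the homogeneity identity $\varphi(\tfrac{x}{2},\tfrac{y}{2},\tfrac{z}{2})=2^{-p}\varphi(x,y,z)$ being the single computation that pins down the final constant.
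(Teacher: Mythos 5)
Your proposal is correct and follows exactly the paper's route: the paper likewise sets $\varphi(x,y,z)=\theta(\|x\|^{p}+\|y\|^{p}+\|z\|^{p})$ and $L=2^{p-1}$ and invokes Theorem~\ref{theorem 1}, with your write-up merely spelling out the homogeneity check and the simplification of the constant that the paper leaves implicit.
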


\begin{proof}
Take $\varphi(x,y,z):=\theta\left( \|x\|^{p}+\|y\|^{p}+\|z\|^{p}\right)$ and $L=2^{p-1}$. Then the proof follows from Theorem \ref{theorem 1}.
\end{proof}

\begin{thm}\label{theorem 2}
Let $f:\Xi \rightarrow\nabla $ be a mapping for which there exists a function $\varphi: \Xi^{3}\rightarrow [0,\infty)$ satisfying (\ref{2.1}) and (\ref{2.2}) for all $x,y,z \in \Xi$ and all $\mu \in \mathbb{T}$. 
If there exists  $0 \leq L < 1$ such that $\varphi (x,y,z)\leq \frac{L}{8} \varphi (2x,2y,2z)$ for all $x,y,z \in \Xi$, then there exists a unique Hilbert $C^{*}$-module homomorphism $H:\Xi\rightarrow \nabla$ such that 
\begin{equation}\label{2.13}
\| f(x)- H(x) \| \leq \dfrac{L}{8-8L}\varphi(x,x,0)
\end{equation}
for all $x \in \Xi$.
\end{thm}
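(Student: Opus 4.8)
The plan is to mirror the fixed point argument of Theorem \ref{theorem 1}, but with the direction of the scaling reversed. The new hypothesis $\varphi(x,y,z)\le \frac{L}{8}\varphi(2x,2y,2z)$ forces $\varphi$ to \emph{grow} under doubling, equivalently $\varphi(\frac{x}{2},\frac{y}{2},\frac{z}{2})\le \frac{L}{8}\varphi(x,y,z)$, so the contractive iteration must now move toward the origin rather than away from it. I would keep the same complete generalized metric space $(\mathcal{S},d)$ defined in (\ref{2.4}), but replace the map $J$ by $(Jg)(x):=2g(\frac{x}{2})$ for all $x\in\Xi$.

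First I would record the basic estimate for $d(f,Jf)$. Setting $x=y$ and $\mu=1$ in (\ref{2.1}) and then replacing $x$ by $\frac{x}{2}$ yields $\|f(x)-2f(\frac{x}{2})\|\le \varphi(\frac{x}{2},\frac{x}{2},0)$; applying the hypothesis with the third slot equal to $0$ gives $\varphi(\frac{x}{2},\frac{x}{2},0)\le \frac{L}{8}\varphi(x,x,0)$, so $d(f,Jf)\le \frac{L}{8}$. Next I would check that $J$ is strictly contractive: if $\|g(x)-h(x)\|\le c\,\varphi(x,x,0)$ for all $x$, then $\|(Jg)(x)-(Jh)(x)\|=2\|g(\frac{x}{2})-h(\frac{x}{2})\|\le 2c\,\varphi(\frac{x}{2},\frac{x}{2},0)\le \frac{L}{4}c\,\varphi(x,x,0)$, whence $d(Jg,Jh)\le \frac{L}{4}d(g,h)$ with $\frac{L}{4}<1$. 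Invoking Theorem \ref{FPA} then produces a unique fixed point $H$ with $H(2x)=2H(x)$ and $\lim_{n\to\infty}2^{n}f(\frac{x}{2^{n}})=H(x)$, together with $d(f,H)\le \frac{1}{1-L/4}\,d(f,Jf)$; bounding $\frac{1}{1-L/4}\le \frac{1}{1-L}$ (valid since $L\ge 0$) then yields $\|f(x)-H(x)\|\le \frac{L}{8-8L}\varphi(x,x,0)$, which is (\ref{2.13}).

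To obtain $\mathbb{C}$-linearity of $H$, I would replace $x,y$ by $\frac{x}{2^{n}},\frac{y}{2^{n}}$ in (\ref{2.1}) and multiply through by $2^{n}$; the control term then satisfies $2^{n}\varphi(\frac{x}{2^{n}},\frac{y}{2^{n}},0)\le (\frac{L}{4})^{n}\varphi(x,y,0)\to 0$, so letting $n\to\infty$ and using $2^{n}f(\tfrac{\cdot}{2^{n}})\to H$ gives $H(\mu x+y)=\mu H(x)+H(y)$ for all $\mu\in\mathbb{T}$. The case $\mu=1$ gives additivity and $H(0)=0$, and Lemma \ref{c-linear} upgrades this to $\mathbb{C}$-linearity. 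Finally, for the multiplicative identity I would substitute $\frac{x}{2^{n}},\frac{y}{2^{n}},\frac{z}{2^{n}}$ into (\ref{2.2}), use $\langle \frac{x}{2^{n}},\frac{y}{2^{n}}\rangle\frac{z}{2^{n}}=2^{-3n}\langle x,y\rangle z$ together with the sesquilinearity of $\langle\cdot,\cdot\rangle$ and the module action to rewrite $2^{3n}\langle f(\frac{x}{2^{n}}),f(\frac{y}{2^{n}})\rangle f(\frac{z}{2^{n}})=\langle 2^{n}f(\frac{x}{2^{n}}),2^{n}f(\frac{y}{2^{n}})\rangle\,2^{n}f(\frac{z}{2^{n}})$, and pass to the limit to get $H(\langle x,y\rangle z)=\langle H(x),H(y)\rangle H(z)$.

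The main obstacle I anticipate is precisely this last limit: I must confirm that the error term $2^{3n}\varphi(\frac{x}{2^{n}},\frac{y}{2^{n}},\frac{z}{2^{n}})$ vanishes, and here the factor $8=2^{3}$ in the hypothesis is essential, since $2^{3n}(\frac{L}{8})^{n}=L^{n}\to 0$. This is exactly the reason the cubic scaling constant $\frac{L}{8}$ (rather than a constant like $\frac{L}{2}$, which would already suffice for the additive part) appears in the statement; it is the dual of the $8L$ bound exploited in the proof of Theorem \ref{theorem 1}.
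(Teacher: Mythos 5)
Your proposal is correct and follows essentially the same route as the paper: the operator $(Jg)(x)=2g(\frac{x}{2})$ on the same generalized metric space, the estimate $d(f,Jf)\le\frac{L}{8}$ obtained from (\ref{2.5}) at $\frac{x}{2}$, and the limits $2^{n}\varphi(\frac{x}{2^{n}},\frac{y}{2^{n}},0)\to 0$ and $2^{3n}\varphi(\frac{x}{2^{n}},\frac{y}{2^{n}},\frac{z}{2^{n}})\le L^{n}\varphi(x,y,z)\to 0$ for the linearity and multiplicativity of $H$. Your explicit verification that $J$ is a contraction with constant $\frac{L}{4}$, followed by the relaxation $\frac{1}{1-L/4}\le\frac{1}{1-L}$, is slightly more careful (and in fact sharper) than the paper's direct use of $\frac{1}{1-L}$, but it is the same argument in substance.
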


\begin{proof}
Consider the linear mapping $J: \mathcal{S}\rightarrow \mathcal{S}$
\begin{equation}\nonumber
Jg\left( x \right) := 2g\left( \frac{1}{2}x\right) 
\end{equation}
for all $x\in \Xi$.\\
It follows from (\ref{2.5}) that 
\begin{equation}\nonumber
\left \| f(2x)-2f\left( \frac{x}{2}\right)  \right \| \leq \varphi\left( \frac{x}{2},\frac{x}{2},0\right)  \leq \frac{L}{8} \varphi(x,x,0)
\end{equation}
for all $x \in \Xi$. By (\ref{2.4}), we have $d(f,Jf)\leq \frac{L}{8}$.

By Theorem \ref{FPA}, there exists a mapping $H:\Xi \rightarrow \nabla$ which is a fixed point of $J$ such that 
\begin{equation}\label{2.14}
H\left( \frac{1}{2}x\right) =\frac{1}{2}H(x)
\end{equation}
for all $x \in \Xi$. Also $\displaystyle{\lim_{n\to\infty}}d(J^{n}f,H)=0$. This implies that 
\begin{equation}\label{2.15}
 \lim_{n\to\infty}2^{n}f\left( \frac{x}{2^{n}}\right) =H\left( x\right) 
\end{equation}
for all $x \in \Xi.$ Notice that  $H$ is the unique fixed point of $J$ in the set 
\begin{equation}\nonumber
U=\lbrace g \in \mathcal{S}: d(g,f)<\infty\rbrace.
\end{equation}
This implies that $H$ is the unique fixed point satisfying (\ref{2.14}) such that there exists $c \in (0,\infty)$ satisfying 
\begin{equation}
\| f(x) - H(x)\|\leq c \varphi(x,x,0)
\end{equation}
for all $x \in \Xi$.\\
Applying Theorem \ref{FPA} once again, we obtain $d(f,H)\leq \frac{1}{1-L}d(f,Jf)\leq \frac{L}{8-8L}.$ This implies 
\begin{equation}\nonumber
 \|f(x)-H(x) \|\leq \frac{L}{8-8L} \varphi(x,x,0)
\end{equation}
for all $x\in \Xi$ and (\ref{2.13}) holds.

From the assumption $\varphi(x,y,z) \leq \frac{L}{8}\varphi(2x,2y,2z)\leq\frac{L}{2}\varphi(2x,2y,2z) $ for all $x,y,z \in \Xi$, it follows that 
\begin{equation}\label{2.17}
\lim_{n\to\infty} 2^{n}\varphi \left( \frac{x}{2^{n}},\frac{y}{2^{n}},\frac{z}{2^{n}}\right) \leq \lim_{n\to\infty} \frac{2^{n}L^{n}}{2^{n}}\varphi (x,y,z)=0.
\end{equation}
In (\ref{2.1}), replacing $x$ by $\frac{x}{2^{n}}$, $y$ by $\frac{y}{2^{n}}$, multiplying both side by $2^{n}$, letting $n \rightarrow \infty$, using (\ref{2.15}) and (\ref{2.17}), we obtain
\begin{equation}\label{2.18}
H(\mu x+y)=\mu H(x)+H(y)
\end{equation}
for all $x,y \in \Xi$ and all $\mu \in \mathbb{T}$. The case $\mu =1$ in (\ref{2.18}) implies that $H$ is additive and by Lemma \ref{c-linear}, $H$ is $\mathbb{C}$-linear.

Replacing $x,y,z$ with $\frac{x}{2^{n}},\frac{y}{2^{n}},\frac{z}{2^{n}}$, respectively, in (\ref{2.2}), we obtain 
\begin{align*}\nonumber
\left \| 2^{3n} f \left(\left\langle  \frac{x}{2^{n}},\frac{y}{2^{n}} \right\rangle \frac{z}{2^{n}} \right) - \left\langle 2^{n}f \left( \frac{x}{2^{n}} \right) ,2^{n} f \left( \frac{y}{2^{n}} \right) \right\rangle 2^{n} f \left( \frac{z}{2^{n}}\right) \right \|  & \leq 2^{3n} \varphi \left( \frac{x}{2^{n}},\frac{y}{2^{n}},\frac{z}{2^{n}}\right)\\
& \leq 2^{3n}2^{-3n}L^{n} \varphi \left( x,y,z \right)
\end{align*}
for all $x,y,x \in \Xi$. Taking $n \rightarrow \infty$, using (\ref{2.15}) and (\ref{2.17}), we obtain 
\begin{equation}\nonumber
H(\left\langle x,y\right\rangle z)=\left\langle H(x),H(y)\right\rangle H(z)
\end{equation}
for all $x,y,z \in \Xi$. Thus $H$ is a Hilbert $C^{*}$-module homomorphism. 
\end{proof}

\begin{cor}
Let $p>3$, $\theta$ be a nonnegative real number and $f:\Xi \rightarrow \nabla$ be a mapping satisfying (\ref{2.10}) and (\ref{2.11}). Then there exists a unique Hilbert $C^{*}$-module homomorphism $H: \Xi \rightarrow \nabla$ such that 
\begin{equation}\nonumber
\| f(x)- H(x) \| \leq \dfrac{2\theta}{2^{p}-2^{3}}\|x\|^{p}
\end{equation}
for all $x \in \Xi$.
\end{cor}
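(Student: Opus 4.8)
The plan is to derive this corollary as a direct specialization of Theorem \ref{theorem 2}, exactly as the previous corollary specialized Theorem \ref{theorem 1}. First I would set the control function to be the symmetric power function
\begin{equation}\nonumber
\varphi(x,y,z):=\theta\left(\|x\|^{p}+\|y\|^{p}+\|z\|^{p}\right),
\end{equation}
so that hypotheses (\ref{2.1}) and (\ref{2.2}) of Theorem \ref{theorem 2} become precisely the assumed inequalities (\ref{2.10}) and (\ref{2.11}). The whole task then reduces to exhibiting an admissible Lipschitz constant $L$ for this $\varphi$ and reading off the resulting bound.

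The key computation is the homogeneity relation $\varphi(2x,2y,2z)=2^{p}\varphi(x,y,z)$, which holds because $\|2x\|^{p}=2^{p}\|x\|^{p}$. The contraction hypothesis of Theorem \ref{theorem 2}, namely $\varphi(x,y,z)\leq\frac{L}{8}\varphi(2x,2y,2z)$, therefore amounts to the scalar inequality $1\leq \frac{L}{8}2^{p}$, and I would choose $L:=2^{3-p}$, which makes this an equality. The hypothesis $p>3$ is exactly what guarantees $0\leq L=2^{3-p}<1$, so $L$ is an admissible Lipschitz constant and Theorem \ref{theorem 2} applies.

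With these choices I would invoke Theorem \ref{theorem 2} to obtain a unique Hilbert $C^{*}$-module homomorphism $H$ satisfying $\|f(x)-H(x)\|\leq \frac{L}{8-8L}\varphi(x,x,0)$. It then remains only to simplify the constant: since $\varphi(x,x,0)=2\theta\|x\|^{p}$ and $L=2^{3-p}$, a short calculation (clearing the factor $2^{p}$ from numerator and denominator) turns $\frac{L}{8-8L}\cdot 2\theta$ into $\frac{2\theta}{2^{p}-2^{3}}$, which is the asserted estimate.

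I do not expect any genuine obstacle here, since all the analytic work — the fixed point argument, the $\mathbb{C}$-linearity via Lemma \ref{c-linear}, and the passage to the limit verifying (\ref{1.1}) — is already carried out in the proof of Theorem \ref{theorem 2}. The only point requiring care is bookkeeping: confirming that $L=2^{3-p}$ (rather than some other power of two) is the constant forced by the homogeneity of $\varphi$, and that the algebraic simplification of $\frac{L}{8-8L}$ indeed collapses to $\frac{2\theta}{2^{p}-2^{3}}$.
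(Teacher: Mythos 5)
Your proposal is correct and follows exactly the paper's route: the paper's proof is the one-line specialization ``take $\varphi(x,y,z)=\theta(\|x\|^{p}+\|y\|^{p}+\|z\|^{p})$ and $L=2^{3-p}$, then apply Theorem \ref{theorem 2}.'' Your verification of the contraction hypothesis via the homogeneity $\varphi(2x,2y,2z)=2^{p}\varphi(x,y,z)$ and the simplification $\frac{L}{8-8L}\cdot 2\theta\|x\|^{p}=\frac{2\theta}{2^{p}-2^{3}}\|x\|^{p}$ are both accurate and merely make explicit the bookkeeping the paper leaves implicit.
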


\begin{proof}
Take $\varphi(x,y,z):=\theta\left( \|x\|^{p}+\|y\|^{p}+\|z\|^{p}\right)$ and $L=2^{3-p}$. Then the proof follows from Theorem \ref{theorem 2}.
\end{proof}

\section{Stability of Hilbert $C^{*}$-module $\ast$-homomorphisms} 

In this section, we establish the Hyers-Ulam stability of Hilbert $C^{*}$-module $\ast$-homomorphisms. 

\begin{theorem}\label{theorem 3}
Let $f:\Xi \rightarrow \nabla$ be a mapping for which there exists a function $\varphi: \Xi^{3}\rightarrow [0,\infty)$ satisfying (\ref{2.1}) and (\ref{2.2}) and 
\begin{equation}\label{3.1}
\| f(\left\langle x,y\right\rangle ^{*}z)- \left\langle f(x),f(y)\right\rangle ^{*}f(z) \| \leq \varphi (x,y,z) 
\end{equation}
for all $x,y,z \in \Xi$. If there exists  $0\leq L < 1$ such that $\varphi (x,y,z)\leq 2 L \varphi (\frac{x}{2},\frac{y}{2},\frac{z}{2})$ for all $x,y,z \in \Xi$, then there exists a unique Hilbert $C^{*}$-module $*$-homomorphism $H:\Xi \rightarrow \nabla $ such that 
\begin{equation}\label{3.2}
\| f(x)- H(x) \| \leq \dfrac{1}{2-2L}\varphi(x,x,0)
\end{equation}
for all $x \in \Xi$.
\end{theorem}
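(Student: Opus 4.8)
The plan is to invoke Theorem \ref{theorem 1} as a black box and then append a single extra limiting argument that promotes the homomorphism it produces into a $\ast$-homomorphism. The point is that the hypotheses of Theorem \ref{theorem 1} — namely (\ref{2.1}), (\ref{2.2}), and the contraction estimate $\varphi(x,y,z)\leq 2L\varphi(\frac{x}{2},\frac{y}{2},\frac{z}{2})$ — are all assumed here as well; condition (\ref{3.1}) is the only addition. Hence the fixed-point construction carried out in the proof of Theorem \ref{theorem 1} applies verbatim and furnishes a unique Hilbert $C^{*}$-module homomorphism $H:\Xi\to\nabla$ satisfying the bound (\ref{3.2}) (which coincides with (\ref{2.3})), together with the representation $H(x)=\lim_{n\to\infty}2^{-n}f(2^{n}x)$ of (\ref{2.7}) and the decay (\ref{2.8}). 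In particular $H$ is already $\mathbb{C}$-linear and satisfies (\ref{1.1}), so the only remaining task is to verify the $\ast$-identity of Definition \ref{1.2}.

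To obtain that identity I would feed the scaled triple $(2^{n}x,2^{n}y,2^{n}z)$ into the new hypothesis (\ref{3.1}), exactly mirroring the way (\ref{2.2}) was handled for (\ref{1.1}) in the proof of Theorem \ref{theorem 1}. Since the module action and both slots of the inner product are rescaled only by the real number $2^{n}$, one has $\langle 2^{n}x,2^{n}y\rangle^{*}(2^{n}z)=2^{3n}\langle x,y\rangle^{*}z$, so after dividing (\ref{3.1}) by $2^{3n}$ and using $\varphi(x,y,z)\le 8L\varphi(\frac{x}{2},\frac{y}{2},\frac{z}{2})$ one arrives at
\begin{equation}\nonumber
\left\| 2^{-3n} f\!\left(2^{3n}\langle x,y\rangle^{*}z\right) - \left\langle \tfrac{f(2^{n}x)}{2^{n}}, \tfrac{f(2^{n}y)}{2^{n}}\right\rangle^{*}\tfrac{f(2^{n}z)}{2^{n}}\right\| \leq 2^{-3n}8^{n}L^{n}\,\varphi(x,y,z).
\end{equation}
Letting $n\to\infty$ and using (\ref{2.7}) (with index $3n$ on the first term), the continuity of the inner product in each variable, and the continuity of the involution $\ast$ on $\mathcal{B}$, the right-hand side tends to $0$ while the left-hand side tends to $\|H(\langle x,y\rangle^{*}z)-\langle H(x),H(y)\rangle^{*}H(z)\|$. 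This yields $H(\langle x,y\rangle^{*}z)=\langle H(x),H(y)\rangle^{*}H(z)$ for all $x,y,z\in\Xi$.

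The steps that require the most care are bookkeeping rather than conceptual. First, one must confirm that the conjugate-linearity of $\langle\cdot,\cdot\rangle$ in its second slot causes no difficulty: because every rescaling factor is the \emph{real} scalar $2^{-n}$, its complex conjugate equals itself and no spurious phase is introduced, so the three factors of $2^{-n}$ combine cleanly into the single prefactor $2^{-3n}$ on both sides. Second, one must justify exchanging $\ast$ with the limit, which follows from continuity of the involution together with continuity of $\langle\cdot,\cdot\rangle$. Once these are in place, $H$ satisfies both (\ref{1.1}) and the $\ast$-identity and is therefore a Hilbert $C^{*}$-module $\ast$-homomorphism; its uniqueness and the estimate (\ref{3.2}) are inherited directly from Theorem \ref{theorem 1}.
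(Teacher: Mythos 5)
Your proposal is correct and follows essentially the same route as the paper: invoke Theorem \ref{theorem 1} to get the homomorphism $H$ with the bound (\ref{3.2}) and the representation (\ref{2.7}), then scale $(x,y,z)\mapsto(2^{n}x,2^{n}y,2^{n}z)$ in (\ref{3.1}), divide by $2^{3n}$, bound the right-hand side by $2^{-3n}8^{n}L^{n}\varphi(x,y,z)$, and pass to the limit. Your citations of (\ref{2.7}) and (\ref{2.8}) are in fact the correct ones here (the paper's proof mistakenly points to (\ref{2.15}) and (\ref{2.17}), which belong to the other scaling direction).
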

\begin{proof}
From Theorem \ref{theorem 1}, there exists a unique Hilbert $C^{*}$-module homomorphism $H$ satisfying (\ref{3.2}) and defined as 
\begin{equation}\label{3.3}
\lim_{n\to\infty}\frac{f(2^{n}x)}{2^{n}} =H\left( x \right) 
\end{equation}
for all $x \in \Xi$.

Replacing $x,y,z$ by $2^{n}x,2^{n}y,2^{z}$, respectively, in (\ref{3.1}), we obtain 
\begin{equation}\nonumber
\| f (\left\langle 2^{n}x,2^{n}y\right\rangle^{*} 2^{n}z) - \left\langle f(2^{n}x),f(2^{n}y)\right\rangle^{*} f(2^{n}z)\| \leq \varphi (2^{n}x,2^{n}y,2^{n}z).
\end{equation}
This further implies that 
\begin{align*}\nonumber 
\left \|2^{-3n} f (2^{3n}\left\langle x,y\right\rangle^{*} z) - \left\langle \frac{f(2^{n}x)}{2^{n}},\frac{f(2^{n}y)}{2^{n}}\right\rangle^{*} \frac{f(2^{n}z)}{2^{n}} \right \| & \leq 2^{-3n} \varphi (2^{n}x,2^{n}y,2^{n}z)\\
& \leq 2^{-3n}2^{3n}L^{n} \varphi (x,y,z)
\end{align*}
for all $x,y,z\in \Xi$. Taking the limit as $n \rightarrow \infty$, using (\ref{2.15}) and (\ref{2.17}), we obtain
\begin{equation}\nonumber 
H(\left\langle x,y\right\rangle ^{*}z)=\left\langle H(x),H(y)\right\rangle^{*} H(z)
\end{equation}
for all $x,y,z \in \Xi$. So $H$ is a Hilbert $C^{*}$-module $\ast$-homomorphism.
\end{proof}

The following corollary gives us the Hyers-Ulam-Rassias stability of Hilbert $C^{*}$-module $\ast$-homomorphisms.

\begin{cor}
Let $p\in [0,1)$, $\theta$ be a nonnegative real number and $f:\Xi \rightarrow \nabla$ be a mapping satisfying (\ref{2.10}), (\ref{2.11}) and 
\begin{equation}\label{3.4}
\| f(\left\langle x,y\right\rangle ^{*}z)- \left\langle f(x),f(y)\right\rangle ^{*}f(z) \| \leq \theta (\|x\|^{p}+\|y\|^{p}+\|z\|^{p})
\end{equation}
for all $x,y,z \in \Xi$. Then there exists a unique Hilbert $C^{*}$-module $\ast$-homomorphism $H:\Xi \rightarrow \nabla$ such that 
\begin{equation}\nonumber
\| f(x)- H(x) \| \leq \frac{2 \theta}{2-2^{p}}\|x\|^{p}
\end{equation}
for all $x \in \Xi$.
\end{cor}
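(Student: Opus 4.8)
The plan is to obtain this corollary as a direct specialization of Theorem \ref{theorem 3}, exactly as the earlier corollaries were deduced from Theorems \ref{theorem 1} and \ref{theorem 2}. First I would set
\[
\varphi(x,y,z):=\theta\bigl(\norm{x}^{p}+\norm{y}^{p}+\norm{z}^{p}\bigr)
\]
and propose the contraction constant $L:=2^{p-1}$. With this choice the three hypothesized inequalities (\ref{2.10}), (\ref{2.11}) and (\ref{3.4}) become precisely (\ref{2.1}), (\ref{2.2}) and (\ref{3.1}): indeed, since $\norm{0}=0$ one has $\varphi(x,y,0)=\theta(\norm{x}^{p}+\norm{y}^{p})$, so the right-hand side of (\ref{2.10}) matches $\varphi(x,y,0)$, while (\ref{2.11}) and (\ref{3.4}) coincide with (\ref{2.2}) and (\ref{3.1}) verbatim.

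Next I would verify the structural condition $\varphi(x,y,z)\le 2L\,\varphi(\tfrac{x}{2},\tfrac{y}{2},\tfrac{z}{2})$ required by Theorem \ref{theorem 3}. Using the positive homogeneity of the norm, $\varphi(\tfrac{x}{2},\tfrac{y}{2},\tfrac{z}{2})=2^{-p}\varphi(x,y,z)$, so the requirement reduces to $1\le 2^{1-p}L$. The choice $L=2^{p-1}$ makes $2^{1-p}L=1$, so the inequality holds with equality; moreover $L=2^{p-1}\in[0,1)$ precisely because $p\in[0,1)$, so $L$ is admissible.

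With both hypotheses in place, Theorem \ref{theorem 3} applies and yields a unique Hilbert $C^{*}$-module $\ast$-homomorphism $H:\Xi\to\nabla$ satisfying $\norm{f(x)-H(x)}\le\frac{1}{2-2L}\varphi(x,x,0)$. The last step is purely cosmetic: $\varphi(x,x,0)=2\theta\norm{x}^{p}$ and $\frac{1}{2-2L}=\frac{1}{2-2^{p}}$, so the bound collapses to $\norm{f(x)-H(x)}\le\frac{2\theta}{2-2^{p}}\norm{x}^{p}$, which is exactly the asserted estimate. I do not anticipate any genuine obstacle; the only points demanding a little care are the algebraic verification of the contraction inequality (together with the attendant check that $L<1$) and the harmless convention $\norm{0}^{p}=0$ used when reconciling (\ref{2.10}) with $\varphi(x,y,0)$.
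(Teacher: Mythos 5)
Your proposal is correct and follows exactly the paper's own route: the paper likewise deduces the corollary from Theorem \ref{theorem 3} by taking $\varphi(x,y,z)=\theta(\|x\|^{p}+\|y\|^{p}+\|z\|^{p})$ and $L=2^{p-1}$. The only difference is that you spell out the verification of the contraction condition and the final arithmetic, which the paper leaves implicit.
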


\begin{proof}
The proof follows from Theorem \ref{theorem 3} by taking $\varphi (x,y,z)=\theta (\|x\|^{p}+\|y\|^{p}+\|z\|^{p})$ and $L=2^{p-1}$. 
\end{proof}

\begin{theorem}\label{theorem 4}
Let $f:\Xi \rightarrow \nabla$ be a mapping for which there exists a function $\varphi: \Xi^{3}\rightarrow [0,\infty)$ satisfying (\ref{2.1}) and (\ref{2.2}) and (\ref{3.1}). If there exists  $0\leq L < 1$ such that $\varphi (x,y,z)\leq \frac{L}{8} \varphi (2x,2y,2z)$ for all $x,y,z \in \Xi$, then there exists a unique Hilbert $C^{*}$-module $\ast$-homomorphism $H:\Xi \rightarrow \nabla$ such that 
\begin{equation}\label{3.5}
\| f(x)- H(x) \| \leq \dfrac{L}{8-8L}\varphi(x,x,0)
\end{equation}
for all $x \in \Xi$.
\end{theorem}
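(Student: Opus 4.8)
The plan is to obtain the underlying homomorphism for free from the earlier theorem and then add the single extra identity coming from (\ref{3.1}). The point is that the hypotheses here---(\ref{2.1}), (\ref{2.2}), together with the contraction condition $\varphi(x,y,z)\leq \frac{L}{8}\varphi(2x,2y,2z)$---are precisely those of Theorem \ref{theorem 2}. So first I would invoke Theorem \ref{theorem 2} to produce a unique Hilbert $C^{*}$-module homomorphism $H:\Xi\to\nabla$ satisfying the bound (\ref{3.5}), which is identical to (\ref{2.13}), and given explicitly by
\begin{equation}\nonumber
H(x)=\lim_{n\to\infty} 2^{n} f\!\left(\frac{x}{2^{n}}\right)
\end{equation}
for all $x\in\Xi$. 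In particular $H$ is already $\mathbb{C}$-linear and satisfies $H(\langle x,y\rangle z)=\langle H(x),H(y)\rangle H(z)$, so the only thing left to verify is the $\ast$-identity of Definition \ref{1.2}.

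To verify it, I would replace $x,y,z$ by $\frac{x}{2^{n}},\frac{y}{2^{n}},\frac{z}{2^{n}}$ in (\ref{3.1}) and multiply through by $2^{3n}$. Because the scalars $2^{-n}$ are real, the inner product scales cleanly, $\langle \frac{x}{2^{n}},\frac{y}{2^{n}}\rangle^{*}\frac{z}{2^{n}}=\frac{1}{2^{3n}}\langle x,y\rangle^{*}z$, and using the contraction estimate $\varphi(\frac{x}{2^{n}},\frac{y}{2^{n}},\frac{z}{2^{n}})\leq \frac{L^{n}}{8^{n}}\varphi(x,y,z)$ I obtain
\begin{align*}
\left\| 2^{3n} f\!\left(\frac{\langle x,y\rangle^{*}z}{2^{3n}}\right)-\left\langle 2^{n}f\!\left(\tfrac{x}{2^{n}}\right),2^{n}f\!\left(\tfrac{y}{2^{n}}\right)\right\rangle^{*} 2^{n} f\!\left(\tfrac{z}{2^{n}}\right)\right\| &\leq 2^{3n}\varphi\!\left(\tfrac{x}{2^{n}},\tfrac{y}{2^{n}},\tfrac{z}{2^{n}}\right)\\
&\leq L^{n}\varphi(x,y,z)
\end{align*}
for all $x,y,z\in\Xi$.

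Finally I would let $n\to\infty$. The right-hand side tends to $0$ since $0\leq L<1$. On the left, (\ref{2.15}) together with continuity of the inner product and of the involution gives $\langle 2^{n}f(\tfrac{x}{2^{n}}),2^{n}f(\tfrac{y}{2^{n}})\rangle^{*}2^{n}f(\tfrac{z}{2^{n}})\to\langle H(x),H(y)\rangle^{*}H(z)$, while $2^{3n} f(\frac{\langle x,y\rangle^{*}z}{2^{3n}})\to H(\langle x,y\rangle^{*}z)$, this being the subsequence $m=3n$ of the convergent sequence $2^{m}f(w/2^{m})\to H(w)$ with $w=\langle x,y\rangle^{*}z$. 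Hence $H(\langle x,y\rangle^{*}z)=\langle H(x),H(y)\rangle^{*}H(z)$, and with the homomorphism property already in hand from Theorem \ref{theorem 2}, this exhibits $H$ as a Hilbert $C^{*}$-module $\ast$-homomorphism. I do not expect a genuine obstacle: the whole argument mirrors Theorem \ref{theorem 3} with the roles of $2^{n}x$ and $x/2^{n}$ interchanged, and the only mildly delicate points are tracking the $2^{3n}$ factor through the sesquilinear inner product and recognizing the left-hand limit as a subsequence of the sequence defining $H$.
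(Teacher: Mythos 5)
Your proposal is correct and follows essentially the same route as the paper: invoke Theorem \ref{theorem 2} for the homomorphism $H$ and the bound (\ref{3.5}), then substitute $x/2^{n},y/2^{n},z/2^{n}$ into (\ref{3.1}), scale by $2^{3n}$, and pass to the limit using the contraction estimate. Your added care about the sesquilinear scaling and the subsequence $m=3n$ of $2^{m}f(w/2^{m})\to H(w)$ only makes explicit details the paper leaves implicit.
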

\begin{proof}
By Theorems \ref{theorem 2}, there exists a unique Hilbert $C^{*}$-module homomorphism $H:\Xi\rightarrow \nabla$ satisfying (\ref{3.5}) and defined as
\begin{equation}\label{3.6}
\lim_{n\to\infty}2^{n}f\left( \frac{x}{2^{n}}\right)  =H \left( x \right) 
\end{equation}
for all $x \in \Xi$.\\
Replacing $x,y,z$ by $\frac{x}{2^{n}},\frac{y}{2^{n}},\frac{z}{2^{n}}$, respectively, in (\ref{3.1}), we obtain
\begin{equation}\nonumber
\left \| 2^{3n}f\left( \left\langle \frac{x}{2^{n}},\frac{y}{2^{n}}\right\rangle ^{*}\frac{z}{2^{n}}\right) -\left\langle 2^{n}f\left( \frac{x}{2^{n}}\right) ,2^{n}f\left( \frac{y}{2^{n}}\right)  \right\rangle ^{*} 2^{n}f\left( \frac{z}{2^{n}} \right) \right \| \leq 2^{3n} \varphi \left( \frac{x}{2^{n}},\frac{y}{2^{n}},\frac{z}{2^{n}}\right) 
\end{equation}
for all $x,y,z \in \Xi$. Taking the limit as $n \rightarrow \infty$. using (\ref{2.15}) and (\ref{2.17}), we obtain
\begin{equation}\nonumber
H(\left\langle x,y\right\rangle ^{*}z)=\left\langle H(x),H(y)\right\rangle ^{*}H(z)
\end{equation}
for all $x,y,z \in \Xi$. So $H$ is a Hilbert $C^{*}$-module $\ast$-homomorphism.
\end{proof}

The following corollary gives us the Hyers-Ulam-Rassias stability of $\ast$-homomorphisms in $\Xi$.

\begin{cor}
Let $p>3$, $\theta$ be a nonnegative real number and $f:\Xi \rightarrow \nabla$ be a mapping  satisfying (\ref{2.10}), (\ref{2.11}) and (\ref{3.4}). Then there exists a unique Hilbert $C^{*}$-module $\ast$-homomorphism $H:\Xi\rightarrow \nabla$ such that 
\begin{equation}
\|f(x)-H(x)\| \leq \frac{2\theta}{2^{p}-2^{3}}\|x\|^{p} 
\end{equation}
for all $x \in \Xi$.
\end{cor}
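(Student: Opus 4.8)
The plan is to deduce this corollary from Theorem \ref{theorem 4} by choosing the control function appropriately, exactly as in the proofs of the earlier corollaries. First I would set $\varphi(x,y,z) := \theta\left(\|x\|^{p}+\|y\|^{p}+\|z\|^{p}\right)$ for all $x,y,z \in \Xi$. With this choice the three hypotheses (\ref{2.10}), (\ref{2.11}) and (\ref{3.4}) imposed on $f$ are literally the inequalities (\ref{2.1}), (\ref{2.2}) and (\ref{3.1}) required by Theorem \ref{theorem 4}, so this step is just a rewriting and nothing substantive needs to be verified here.

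Next I would check the contraction hypothesis of Theorem \ref{theorem 4}, namely that there is $0 \leq L < 1$ with $\varphi(x,y,z) \leq \frac{L}{8}\varphi(2x,2y,2z)$. Since $\|2w\|^{p}=2^{p}\|w\|^{p}$, the chosen $\varphi$ satisfies $\varphi(2x,2y,2z)=2^{p}\varphi(x,y,z)$, so the required inequality reduces to $1 \leq \frac{L}{8}2^{p}=L\,2^{p-3}$, that is, $L \geq 2^{3-p}$. Because $p>3$ we have $2^{3-p}<1$, and therefore the choice $L=2^{3-p}$ lies in $[0,1)$ and meets the hypothesis with equality.

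With these choices Theorem \ref{theorem 4} yields a unique Hilbert $C^{*}$-module $\ast$-homomorphism $H:\Xi\rightarrow\nabla$ satisfying $\|f(x)-H(x)\| \leq \frac{L}{8-8L}\varphi(x,x,0)$. The only remaining work is the algebraic simplification of the constant: here $\varphi(x,x,0)=2\theta\|x\|^{p}$, and substituting $L=2^{3-p}$ gives
\begin{equation*}
\frac{L}{8-8L}\cdot 2\theta = \frac{L\theta}{4(1-L)} = \frac{\theta}{4}\cdot\frac{2^{3-p}}{1-2^{3-p}} = \frac{\theta}{4}\cdot\frac{2^{3}}{2^{p}-2^{3}} = \frac{2\theta}{2^{p}-2^{3}},
\end{equation*}
where the third equality multiplies numerator and denominator by $2^{p}$. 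This produces exactly the claimed estimate $\|f(x)-H(x)\| \leq \frac{2\theta}{2^{p}-2^{3}}\|x\|^{p}$ for all $x\in\Xi$.

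Since the whole argument is a direct specialization of an already-proved theorem, I do not anticipate a genuine obstacle. The only point demanding care is the homogeneity bookkeeping that pins down the correct value $L=2^{3-p}$ (and confirms $L<1$ precisely because $p>3$), together with the constant simplification above, which must be checked to agree with the stated bound.
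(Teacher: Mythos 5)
Your proof is correct and is essentially identical to the paper's own argument: the paper also takes $\varphi(x,y,z)=\theta(\|x\|^{p}+\|y\|^{p}+\|z\|^{p})$ and $L=2^{3-p}$ and invokes Theorem \ref{theorem 4}. Your version merely spells out the verification of the contraction condition and the constant simplification, both of which check out.
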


\begin{proof}
Take $\varphi (x,y,z)= \theta (\|x\|^{p}+\|y\|^{p}+\|z\|^{p})$ and $L=2^{3-p}$. Then the proof follows from Theorem \ref{theorem 4}.
\end{proof}

\section{Conclusion}

In this paper, we introduced the idea of $\ast$-homomorphisms on Hilbert $C^{*}$-modules. Furthermore, we established the Hyers-Ulam and Hyers-Ulam-Rassias stability of Hilbert $C^{*}$-module homomorphisms and Hilbert $C^{*}$-module $\ast$-homomorphisms using the fixed point method.

\medskip

\section*{Declarations}

\medskip

\noindent \textbf{Availability of data and materials}\newline
\noindent No datasets were generated or analyzed during the current study.

\medskip

\noindent \textbf{Human and animal rights}\newline
\noindent We would like to mention that this article does not contain any studies
with animals and does not involve any studies over human being.

\medskip

\noindent \textbf{Conflict of interest}\newline
\noindent The authors declare that they have no competing interests.

\medskip

\noindent \textbf{Fundings} \newline
\noindent  Not applicable.

\medskip

\noindent \textbf{Authors' contributions}\newline
\noindent The authors equally conceived of the study, participated in its
design and coordination, drafted the manuscript, participated in the
sequence alignment, and read and approved the final manuscript. 

\medskip

{\footnotesize

\end{document}